\newcommand{\beq}{\begin{equation}}
\newcommand{\eeq}{\end{equation}}
\def\R{\mathbb{R}}
\def\J{\mathcal{J}}
\def\I{\mathcal{I}}
\def\on{orthonormal  }
\numberwithin{equation}{section}
\newtheorem{theorem}{Theorem}
\newtheorem{proposition}{Proposition}
\newtheorem{lemma}{Lemma}
\newtheorem{definition}{Definition}
\renewcommand{\emph}[1]{{\bfseries\itshape{#1}}}
\numberwithin{figure}{section}
\def\sR{subRiemannian } 
\def\on{orthonormal }
\begin{document}

\newtheorem*{backgroundtheorem}{Background Theorem}

% the `*' in front gets rid of the numbering;  if I put this above \begin{document}
%formatting gets messed up

\title[No Periodic Geodesics in Jet Space]{No Periodic Geodesics in Jet Space}  
\author[A.\ Bravo-Doddoli]{Alejandro\ Bravo-Doddoli} 
\address{Alejandro Bravo-Doddoli: Dept. of Mathematics, UCSC,
1156 High Street, Santa Cruz, CA 95064}
\email{Abravodo@ucsc.edu}
\keywords{Carnot group, Jet space,  integrable system,   Goursat distribution, sub
Riemannian geometry, Hamilton-Jacobi, periodic geodesics}
\begin{abstract} 
The $J^k$ space of $k$-jets of a real function of one real variable $x$ admits the structure of a \sR manifold, which then has an associated Hamiltonian geodesic flow, and it is integrable. As in any Hamiltonian flow, a natural question is the existence of periodic solutions. Does $J^k$ have periodic geodesics? This study will find the action-angle coordinates in $T^*J^k$ for the geodesic flow and demonstrate that geodesics in $J^k$ are never periodic.  
\end{abstract}

\maketitle

\section{Introduction}

This paper is the first of three where we will prove that Carnot groups do not have periodic \sR geodesics; Enrico Le Donne made this conjecture. Here, we will establish the first case we found, which also has a simple and elegant proof. We will prove the conjecture in the case of the $k$-jets of a real function of a single variable, denoted by $J^k$. We shall publish soon the second case, namely, the meta-abelian Carnot groups, $0 = [[G,G],[G,G]]$, and the third one for a general Carnot group. All proofs have the same spirit; we will define a non-degenerate inner product in the space of polynomials on $x$ of degree bounded by $s$, where $s$ is the step of the group $G$, $J^k$ is an example of meta-abelian Carnot group with step $s = k$.

This work is the continuation of \cite{ABD,RM-ABD}, in \cite{ABD} $J^k$ was presented as \sR manifold, the \sR geodesic flow was defined, and its integrability was verified. In \cite{RM-ABD}, the \sR geodesics in $J^k$ were classified, and some of their minimizing properties were studied. The main goal of this paper is to prove:

\begin{theorem}\label{the:non-period}
$J^k$ does not have periodic geodesics.
\end{theorem}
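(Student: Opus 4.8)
The plan is to first collapse the geodesic flow to a single nontrivial degree of freedom. Using the orthonormal frame $X=\partial_x+\sum_{i=0}^{k-1}u_{i+1}\partial_{u_i}$, $Y=\partial_{u_k}$ and the dual momenta $p_x,p_0,\dots,p_k$, the geodesic Hamiltonian is $H=\tfrac12(P_X^2+P_Y^2)$ with $P_X=p_x+\sum_{i=1}^{k}u_i\,p_{i-1}$ and $P_Y=p_k$. Since $H$ is independent of $x$ and $u_0$, the momenta $p_x,p_0$ are conserved, and Hamilton's equations give $\dot p_i=-P_X\,p_{i-1}$ together with $\dot x=P_X$. Hence $dp_i/dx=-p_{i-1}$, so that, as a function of $x$, $P(x):=p_k(x)$ is a \emph{fixed} real polynomial of degree $k$ (of degree $<k$ when $p_0=0$), whose coefficients are constants of motion. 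Normalizing to unit speed, conservation of $H$ yields the separated equation $\dot x^{2}=1-P(x)^{2}$, so the $x$--motion is that of a one--dimensional particle confined to $\{P^2\le 1\}$.

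\textbf{From periodicity of the geodesic to a fiber drift.}
I would next classify the $x$--motion. Because $\deg P\ge 1$, the set $\{P^2\le 1\}$ is bounded; if the orbit is a straight ray, is constant in $x$, or is asymptotic to a degenerate turning point ($P=\pm1$, $P'=0$), then $x$ does not return and the geodesic is visibly not periodic. The remaining, main case is a genuine libration between two simple turning points $x_-<x_+$ where $P^2=1$, so that $x(t)$ is periodic of some period $\tau$ and all momenta return. It then suffices to decide whether the tower $(u_0,\dots,u_k)$ returns. These coordinates satisfy the \emph{linear} inhomogeneous system $\dot u_k=P$, $\dot u_i=P_X\,u_{i+1}$ $(i<k)$, whose homogeneous part is nilpotent. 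Reparametrizing by $x$ on the ascending and descending branches, the homogeneous solution operators are $\exp(N\ell)$ and $\exp(-N\ell)$ with $\ell=x_+-x_-$ and $N$ the shift; they cancel over a full period. Consequently the period map on the fiber is a pure translation $\vec u\mapsto\vec u+\vec b$ by a vector $\vec b$ \emph{independent of the basepoint} (reflecting left--invariance), and the geodesic is periodic if and only if $\vec b=0$.

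\textbf{The inner--product obstruction.}
Carrying out the quadrature, each component comes out as a moment of $P/\sqrt{1-P^2}$:
\[
\vec b_i=2\int_{x_-}^{x_+}\frac{P(s)\,(x_--s)^{\,k-i}}{(k-i)!\,\sqrt{1-P(s)^{2}}}\,ds ,\qquad i=0,\dots,k .
\]
Since the polynomials $(x_--s)^{k-i}$, $i=0,\dots,k$, span all polynomials of degree $\le k$, the condition $\vec b=0$ is equivalent to $\int_{x_-}^{x_+}\frac{P\,Q}{\sqrt{1-P^2}}\,ds=0$ for every polynomial $Q$ of degree $\le k$. Here is the crux: on polynomials of degree $\le k$ the bilinear form $\langle Q_1,Q_2\rangle=\int_{x_-}^{x_+}\frac{Q_1 Q_2}{\sqrt{1-P^2}}\,ds$ is a genuine positive--definite inner product (the weight is positive on the open interval). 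Taking $Q=P$ would force $\langle P,P\rangle=0$, which is impossible for a nonzero $P$. Therefore $\vec b\neq0$, and no libration geodesic closes up; combined with the degenerate cases this proves the theorem. This is exactly the ``non--degenerate inner product on polynomials'' foreshadowed in the introduction.

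\textbf{Main obstacle.}
The essential insight is recognizing the weighted inner product and the self--pairing $\langle P,P\rangle$; everything hinges on that one positivity. The genuinely technical points I expect to have to manage carefully are the convergence of the moment integrals at the simple turning points (where $1-P^2$ vanishes to first order, so $1/\sqrt{1-P^2}$ is integrable), the rigorous identification of the period map with a translation through the nilpotent monodromy cancellation, and a clean exhaustion of the non--oscillatory and degenerate configurations of $\{P^2\le1\}$ (including $p_0=0$ and $P\equiv\text{const}$), each of which must be shown to yield a non--periodic geodesic.
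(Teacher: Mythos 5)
Your proposal is correct, and its punchline is exactly the paper's: the weighted pairing $\langle Q_1,Q_2\rangle=\int \frac{Q_1Q_2}{\sqrt{1-P^2}}\,ds$ on polynomials of degree $\le k$, with the observation that vanishing of all period-drifts makes $P$ orthogonal to a spanning set, which is impossible for $P\neq 0$. (The paper phrases this as: $F$ is orthogonal to the basis $\{x^i\}$, hence $F=0$ by non-degeneracy; your choice $Q=P$ plus positive-definiteness is the same point, a shade more direct.) Where you genuinely diverge is in how you reach the drift formula, which is the paper's Proposition \ref{prop:period} and occupies essentially all of its Section \ref{sec:geo-eq}. The paper works in the coordinates $(x,\theta_0,\dots,\theta_k)$ with frame $X_1=\partial_x$, $X_2=\sum \frac{x^i}{i!}\partial_{\theta_i}$, constructs action-angle variables, solves the sub-Riemannian Hamilton--Jacobi equation by a generating function $S$, proves a calibration identity $dS(\dot\gamma)=1$, and extracts $\Delta\theta_i=-\frac{1}{i!}\,\partial\Pi/\partial a_i$ by differentiating in the constants of motion. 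You instead work in the Goursat jet coordinates $(x,u_0,\dots,u_k)$ (isometric to the paper's model under the standard triangular change of variables) and obtain the drift by bare-hands variation of parameters: the fiber system is affine with nilpotent homogeneous part, the monodromies $e^{\pm N\ell}$ cancel over a full period, and the translation vector comes out as moments of $P/\sqrt{1-P^2}$ against $(x_--s)^{k-i}/(k-i)!$, which span degree $\le k$ just as well as the paper's $x^i/i!$ --- I checked the quadrature and your formula for $\vec b$ is right, including the factor $2$ from the two branches. Your route is more elementary and self-contained (no Hamilton--Jacobi or action-angle machinery, and you rederive rather than cite the classification of the $x$-motion); the paper's machinery buys reusable structure --- the adiabatic invariant $\Pi$, action-angle coordinates, the calibration lemma --- which the author intends to deploy again for general Carnot groups in the sequels. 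One point to make airtight when you write this up: like the paper, you analyze only projections of the Hamiltonian flow, so you must invoke (or cite, as the paper does) that the abnormal geodesics of $J^k$ are precisely the $x$-constant vertical lines already covered by your degenerate cases, so that no strictly abnormal geodesic escapes the argument.
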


Following the classification of geodesics from \cite{RM-ABD} (see pg. 5), the only candidates to be periodic are the ones called $x$-periodic (the other geodesics are not periodic on the $x$-coordinate); so we are focusing on the $x$-periodic geodesics.

An essential tool in \cite{RM-ABD} and this work is the bijection made by Monroy-Perez and Anzaldo-Meneses \cite{Monroy1,Monroy2,Monroy3}, also described in \cite{RM-ABD} (see pg. 4), between geodesics on $J^k$ and the pair $(F,I)$ (module translation $F(x) \to F(x-x_0)$), where $F(x)$ is a polynomial of degree bounded by $k$ and $I$ is a closed interval called Hill interval. Let us formalize its definition.
\begin{definition}
A closed interval $I$ is called Hill interval of $F(x)$, if for each $x$ inside $I$ then $F^2(x)<1$ and $F^2(x) = 1$ if $x$ is in the boundary of $I$.% The union of all the Hill interval $I$ to $F(x)$ is called the Hill region of $F(x)$. 
\end{definition}
By definition, the Hill interval $I$ of a constant polynomial $F^2(x) = c^2 < 1$ is $\R$, while, the Hill interval $I$ of the constant polynomial $F(x) = \pm 1$ is a single point. Also, $I$ is compact, if and only if, $F(x)$ is not a constant polynomial; in this case, if $I$ is in the form $[x_0,x_1]$, then $F^2(x_1)= F^2(x_0) = 1$. This terminology comes from celestial mechanics, and $I$ is the region where the dynamics governed by the fundamental equation \eqref{eq:fund} take place.

Geodesics corresponding to constant polynomials are called horizontal lines since their projection to $(x,\theta_0)$ planes are lines. In particular geodesic corresponding to $F(x) = \pm  1$ are abnormal geodesics (see \cite{tour}, \cite{BryantHsu} or \cite{monster}). Then this work will be restricted to geodesics associated with non-constant polynomials. $x$-periodic geodesics correspond to the pair  $(F,[x_0,x_1])$, where $x_0$ and $x_1$ are regular points of $F(x)$, which implies they are simple roots of $1-F^2(x)$.

\subsection*{Outline of the paper}
In Section \ref{sec:proof-th}, Proposition \ref{prop:period} is introduced and Theorem \ref{the:non-period} is proved.
The main propose of Section \ref{sec:geo-eq} is to prove Proposition \ref{prop:period}. In sub-Section \ref{sub:sec1},  the \sR structure and the \sR Hamiltonian geodesic function are introduced. In sub-Section \ref{sub:sec2}, a generating function is presented and a canonical transformation from traditional coordinates in $T^*J^k$ to action-angle coordinates $(\I,\phi)$ for the Hamiltonian systems are shown. In sub-Section \ref{sub:sec3}, Proposition \ref{prop:period} is proved.

\subsection*{Acknowledgments}

I want to express my gratitude to Enrico Le Donne for asking us about the existence of periodic geodesics and thus posing the problem.
I want to thank my advisor Richard Montgomery for his invaluable help. 
 This paper was developed with the support of the scholarship (CVU 619610) from  \enquote{Consejo de Ciencia y Tecnologia}   (CONACYT).

\section{Proof of theorem \ref{the:non-period}} \label{sec:proof-th}

Throughout the work the alternate coordinates  $(x,\theta_0,\cdots,\theta_k)$ will be used, the meaning of which meaning is introduced in the Section \ref{sec:geo-eq} and described in more detail in \cite{Monroy1,Monroy2} or \cite{RM-ABD}. $x$-periodic geodesics have the property that the change undergone by the coordinates $\theta_i$ after one $x$-period is finite and does not depend on the initial point. We summarize the above discussion with the following proposition.
\begin{proposition}\label{prop:period}
Let $\gamma(t) = (x(t),\theta_0(t),\cdots,\theta_k(t))$ in  $J^k$ be an $x$-periodic geodesic corresponding to the pair $(F,I)$. Then the $x$-period is 
\begin{equation}\label{eq:period}
L(F,I) = 2\int_{I} \frac{dx}{\sqrt{1-F^2(x)}},
\end{equation}
Moreover, it is twice the time it takes for the $x$-curve to cross its Hill interval exactly once. After one period, the changes $\Delta \theta_i := \theta_i(t_0+L) - \theta_i(t_0)$ for $i = 0,1, \dots, k$ undergone by $\theta_i$ are given by
\begin{equation}\label{eq:the-period}
\Delta \theta_i(F,I) = \frac{2}{i!} \int_{I} \frac{x^i F(x)dx}{\sqrt{1-F^2(x)}}.
\end{equation}
\end{proposition}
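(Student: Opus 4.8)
The plan is to reduce the statement to the analysis of a one-dimensional oscillator. First I would record the two ingredients that the geodesic equations of Section \ref{sec:geo-eq} (equivalently, the classification of \cite{RM-ABD}) supply for a geodesic parametrized by arc length and corresponding to the pair $(F,I)$: the fundamental equation \eqref{eq:fund}, which I read as $\dot x^2 = 1-F^2(x)$, and the evolution of the vertical coordinates, $\dot\theta_i = \frac{x^i}{i!}F(x)$ for $i=0,\dots,k$. The decisive structural feature is that each right-hand side is a function of the single variable $x$ alone, through the fixed polynomial $F$, so it is insensitive to the sign of $\dot x$. In the $x$-periodic case $I=[x_0,x_1]$ with $x_0,x_1$ simple roots of $1-F^2$; these are exactly the turning points $\dot x=0$, between which $x(t)$ oscillates monotonically.

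For the period I would change the time variable to $x$. On the ascending leg $\dot x=+\sqrt{1-F^2(x)}>0$, so $dt=dx/\sqrt{1-F^2(x)}$ and the time to cross $I$ once is $\int_{x_0}^{x_1}dx/\sqrt{1-F^2(x)}$. The descending leg has $\dot x=-\sqrt{1-F^2(x)}$, and the reversal of the limits of integration cancels the sign, so it takes the same time; adding the two legs gives $L(F,I)=2\int_I dx/\sqrt{1-F^2(x)}$, which is \eqref{eq:period} and is manifestly twice the single-crossing time. Convergence of the improper integral at the endpoints is where one must be slightly careful: since $x_0,x_1$ are \emph{simple} roots of $1-F^2$, one has $1-F^2(x)=c(x-x_0)+O((x-x_0)^2)$ with $c\neq0$ near $x_0$, so the integrand behaves like $|x-x_0|^{-1/2}$, which is integrable. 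This is exactly where the regular-point (simple-root) hypothesis enters, and it also explains why the displacement is finite and independent of the initial point: over one full period the orbit sweeps $I$ once up and once down no matter where in $I$ it starts.

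For the vertical displacements I would compute $\Delta\theta_i=\int_{t_0}^{t_0+L}\dot\theta_i\,dt$ by the same splitting. The key point is that $\dot\theta_i=\frac{x^i}{i!}F(x)$ is the same function of $x$ on the ascending and descending legs, while $dt$ carries the compensating sign $\pm dx/\sqrt{1-F^2(x)}$ together with the reversed limits; hence the two legs \emph{add} rather than cancel, and each contributes $\frac{1}{i!}\int_I x^iF(x)\,dx/\sqrt{1-F^2(x)}$. This yields \eqref{eq:the-period}. Convergence is again controlled by the simple roots: the numerator $x^iF(x)$ is bounded on $I$ (indeed $|F|=1$ at the endpoints), so the integrand still has only the integrable $|x-x_0|^{-1/2}$ singularity.

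The real content, and the step I expect to be the main obstacle, is establishing the two geodesic equations in the stated normalized form — in particular deriving $\dot\theta_i=\frac{x^i}{i!}F(x)$, with the correct polynomial $F$ and the factorials, from the sub-Riemannian Hamiltonian in the $(x,\theta_0,\dots,\theta_k)$ coordinates; this is precisely the computation deferred to Section \ref{sec:geo-eq}. Once those equations are in hand, the remainder is the bookkeeping of a single one-dimensional oscillation, and the only genuine subtleties are the sign analysis that makes the two half-periods add and the use of the simple-root hypothesis to guarantee both finiteness and independence from the starting point.
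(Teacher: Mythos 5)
Your proposal is correct, but it follows a genuinely different route from the paper. You integrate the geodesic ODEs directly: starting from $\dot x^{2}=1-F^{2}(x)$ and $\dot\theta_i=\frac{x^i}{i!}F(x)$ (both of which the paper does supply, via Lemma \ref{lem:geo-flow} and the fundamental equation \eqref{eq:fund}, so the step you flag as the ``main obstacle'' is already available), you change variables $t\mapsto x$ on the two monotone legs of the oscillation, observe that the $\theta_i$-integrands are even in the sign of $\dot x$ so the legs add, and control convergence and turning at the endpoints by the simple-root hypothesis. This is the classical-mechanics argument in the style of Landau, essentially the one the paper attributes to Proposition 4.1 of \cite{RM-ABD}. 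The paper instead proves the proposition through the Hamilton--Jacobi/action-angle machinery it builds in Section \ref{sec:geo-eq}: it constructs the generating function $S(\I,q)$, proves the calibration identity $dS_F(\dot\gamma)=1$ (Lemma \ref{lem:calib}), integrates it over one $x$-period to obtain
\begin{equation*}
L(F,I)=\Pi(F,I)+\sum_{i=0}^{k} i!\,a_i\,\Delta\theta_i(F,I),
\end{equation*}
and then differentiates with respect to the parameters $a_i$ to extract $\Delta\theta_i=-\frac{1}{i!}\,\partial\Pi/\partial a_i$, recovering \eqref{eq:the-period}; the period formula itself is quoted from the standard theory (Arnold) via $\partial\Pi/\partial h=L$. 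Your approach buys elementarity and self-containedness: it needs nothing beyond the geodesic equations and one change of variables. The paper's approach buys structure: it exhibits the displacements $\Delta\theta_i$ as partial derivatives of the action $\Pi(F,I)$, which ties the proposition into the Arnold--Liouville/adiabatic-invariant framework that the paper develops anyway and that underlies the sequels announced in the introduction. One small point of care in your version: besides integrability of the endpoint singularity, you should note that $F(x_0)F'(x_0)\neq 0$ at a simple root forces the trajectory to actually turn around in finite time, so the motion is a genuine oscillation; this is implicit in the smoothness of the loop $\alpha_{(F,I)}$ (the paper's Lemma \ref{lem:alp-smo}) and is standard, but it is the place where the regular-endpoint hypothesis is used beyond mere convergence.
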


In \cite{RM-ABD}, a \sR manifold $\R^3_{F}$, called magnetic space, was introduced and  a similar statement like Proposition \ref{prop:period} was proved, see Proposition 4.1 from \cite{RM-ABD} (pg. 13), with an argument of classical mechanics, see \cite{Landau} page 25 equation (11.5).  

\ref{prop:period} implies that a $x$-periodic geodesic $\gamma(t)$ corresponding to the pair $(F,I)$ is periodic if and only if $\Delta \theta_i(F,I) = 0$ for all $i$.

\vskip .3cm

Because that period $L$ from equation \eqref{eq:period} is finite, we can define an inner product in the space of polynomials of degree bounded by $k$ in the following way;
\begin{equation}
 <P_1(x),P_2(x)>_{F} := \int_{I} \frac{P_1(x)P_2(x)dx}{\sqrt{1-F^2(x)}}. 
\end{equation}
This inner product is non-degenerate and will be the key to the proof of theorem \ref{the:non-period}.

\subsection{Proof of Theorem \ref{the:non-period}}

\begin{proof}
We will proceed by contradiction. Let us assume $\gamma(t)$ is a periodic geodesic on $J^k$ corresponding to the pair $(F,I)$, where $F(x)$ is not constant, then $\Delta \theta_i(F,I) = 0$ for all $i$ in $0,\cdots,k$. 

In the context of the space of polynomials of degree bounded by $k$ with inner product $< , >_{F}$, the condition $\Delta \theta_i(F,I) = 0$ is equivalent to $F(x)$ being perpendicular to $x^i$ ($0 = \Delta \theta_i(F,I) =  <x^i,F(x)>_{F}$), so  $F(x)$ being perpendicular to $x^i$ for all $i$ in $0,1,\cdots,k$. However, the set $\{x^i\}$ with $0\leq i \leq k$ is a base for the space of polynomials bounded by $k$, then $F(x)$ is perpendicular to any vector, so $F(x)$ is zero since the inner product is non-degenerate.
Being $F(x)$ equals $0$ contradicts the assumption that $F(x)$ is not a constant polynomial. 
\end{proof}

\section{Proof of proposition \ref{prop:period}}\label{sec:geo-eq}

\subsection{$J^k$ as a \sR manifold}\label{sub:sec1}
The \sR structure on $J^k$ will be here briefly described. For more details, see \cite{ABD,RM-ABD}. We see $J^k$ as $\R^{k+2}$, using $(x,\theta_0,\cdots,\theta_k)$ as global coordinates, then $J^k$ is endowed with a natural rank 2 distribution $D \subset TJ^k$ characterized by the $k$ Pafaffian equations \begin{equation}
 0 = d\theta_i - \frac{1}{i!} x^i d\theta_0, \qquad i=1,\cdots,k. 
\end{equation}
$D$ is globally framed by two vector fields 
\begin{equation}
X_1 = \frac{\partial}{\partial x}, \qquad X_2 = \sum_{i=0}^k \frac{x^i}{i!} \frac{\partial}{\partial \theta_i}.
\end{equation}
A \sR structure on $\J^k$ is defined by declaring these two vector fields to be \on.
In these coordinates the \sR metric is given by restricting $ds^2 = dx^2 + d\theta_0^2$ to $D$.

\subsubsection{Sub-Riemannian geodesic flow}
Here it is emphasized that the projections of the solution
 curves for the Hamiltonian geodesic flow are geodesics, that is, if $(p(t),\gamma(t))$ is a solution for the Hamiltonian geodesic flow then $\gamma(t)$ is a geodesic on $J^k$. 
 
Let $(p_x,p_{\theta_0},\cdots,p_{\theta_k},x,\theta_0,\cdots,\theta_k)$ be  the traditional coordinates on $T^*J^k$, or in short way as $(p,q)$. Let $P_1,P_2:T^*J^k \to \R$ be the momentum functions of the vector fields $X_1,X_2$, see \cite{tour} 8 pg or see \cite{agrachev}, in terms of the coordinates $(p,q)$ are given by 
\begin{equation}\label{eq:mom-def}
 P_1(p,q) := p_x, \qquad P_2(p,q) := \sum_{i=0}^k p_{\theta_i} \frac{x^i}{i!}.
\end{equation}

Then the Hamiltonian governing the geodesic on $J^k$ is
\begin{equation}\label{eq:ham}
H_{sR}(p,q) := \frac{1}{2}(P_1^2+P_2^2) = \frac{1}{2}p_x^2 + \frac{1}{2}(\sum_{i=0}^k p_{\theta_i} \frac{x^i}{i!})^2.
\end{equation}
It is noteworthy that $h = 1/2$ implies that the geodesic is parameterized by arc-length. It can be noticed that $H$ does not depend on $\theta_i$ for all $i$, then $p_\theta$'s define a $k+1$ constants of motion. 
\begin{lemma}\label{lem:geo-flow}
The \sR geodesic flow in $J^k$ is integrable, if $(p(t),\gamma(t))$ is a solution then
\begin{equation*}
\dot{\gamma}(t) = P_1(t) X_1 + P_2(t) X_2,\qquad (P_1(t),P_2(t)) = (p_x(t),F(x(t))), 
\end{equation*}
where $p_{\theta_i} = i! a_i$ and $F(x) = \sum_{i=0}^k a_i x^i$.
\end{lemma}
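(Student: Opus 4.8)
The plan is to prove Lemma~\ref{lem:geo-flow}, which packages two separate assertions: first, that the geodesic flow is integrable, and second, that along any solution the momentum functions satisfy $P_1(t) = p_x(t)$ and $P_2(t) = F(x(t))$ for a polynomial $F(x) = \sum_{i=0}^k a_i x^i$ whose coefficients are determined by the conserved momenta via $p_{\theta_i} = i!\, a_i$.

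First I would establish integrability by exhibiting $k+2$ independent Poisson-commuting integrals. As already observed in the excerpt, $H_{sR}$ is independent of every $\theta_i$, so each conjugate momentum $p_{\theta_i}$ is conserved, giving $k+1$ first integrals; since these are coordinate momenta they trivially Poisson-commute with one another, and each Poisson-commutes with $H_{sR}$ precisely because $\partial H_{sR}/\partial \theta_i = 0$. Together with $H_{sR}$ itself this yields $k+2 = \dim J^k$ commuting integrals on the $2(k+2)$-dimensional $T^*J^k$, which is the definition of Liouville integrability. I would note that functional independence holds on the open dense set where the differentials are linearly independent.

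Second I would unwind the formula for $\dot\gamma(t)$. Since $\gamma$ is the base projection of a solution of the geodesic flow, its velocity is the horizontal vector whose coefficients in the orthonormal frame $X_1, X_2$ are exactly the momentum functions; this is the standard fact that for a \sR Hamiltonian $H_{sR} = \tfrac12(P_1^2+P_2^2)$ one has $\dot q = P_1 X_1 + P_2 X_2$ (compute $\dot q = \partial H_{sR}/\partial p$ and match against the definitions~\eqref{eq:mom-def} of $P_1, P_2$). It then remains to identify $P_2$ with a polynomial in $x$. Setting $a_i := p_{\theta_i}/i!$ and substituting the conserved values of the $p_{\theta_i}$ into the definition $P_2 = \sum_{i=0}^k p_{\theta_i} x^i/i!$ from~\eqref{eq:mom-def} gives immediately $P_2 = \sum_{i=0}^k a_i x^i = F(x)$, and because the $a_i$ are constants of motion, $F$ is a fixed polynomial while only its argument $x = x(t)$ evolves; hence $P_2(t) = F(x(t))$. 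The identity $P_1(t) = p_x(t)$ is the definition of $P_1$.

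I do not expect a serious obstacle here, as the lemma is essentially a restatement and bookkeeping exercise built on the conservation of the $p_{\theta_i}$. The one point deserving care is the passage from ``the momenta are conserved'' to ``$F$ is a fixed polynomial whose argument alone varies,'' i.e.\ making explicit that the time dependence of $P_2(t)$ enters only through $x(t)$ and not through the coefficients; this is what licenses writing $P_2(t) = F(x(t))$ and is exactly the structural feature exploited downstream when the fundamental equation~\eqref{eq:fund} and the Hill interval are analyzed.
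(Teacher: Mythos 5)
Your proposal is correct and takes essentially the same route as the paper: conservation of $H_{sR}$ and of the $p_{\theta_i}$ gives integrability, linearity of $P_1,P_2$ in the momenta gives $\dot{\gamma}(t) = P_1(t)X_1 + P_2(t)X_2$, and substituting the conserved values $p_{\theta_i} = i!\,a_i$ into the definition \eqref{eq:mom-def} of $P_2$ yields $P_2(t) = F(x(t))$. If anything, you are more careful than the paper's very terse proof, which asserts integrability from the existence of the constants of motion without spelling out Poisson-commutation or functional independence as you do.
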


\begin{proof}
$H$ does not depend on $t$ and $\theta_i$ for all $i$, so $h := H_{sR} $ and $p_{\theta_i}$ are constants of motion, thus the Hamiltonian system is integrable. First equation form the Lemma \ref{lem:geo-flow} is consequence that $P_1$ and $P_2$ are linear in $p_x$ and $p_\theta$'s. We denote by $(a_0,\cdots,a_k)$ the level set $i! a_i = p_{\theta_i}$, then by definition of $P_1$ and $P_2$ given by equation \ref{eq:mom-def}. 
\end{proof}

\subsubsection{Fundamental equation}

The level set $(a_0,\cdots,a_k)$ defines a fundamental equation 
\begin{equation}\label{eq:fund}
H_F(p_x,x):= \frac{1}{2}p_x^2 + \frac{1}{2}F^2(x) = H|_{(a_0,\cdots,a_k)}(p,q) =  \frac{1}{2}.
\end{equation} 
Here $H_F(p_x,x)$ is a Hamiltonian function in the phase plane $(p_x,x)$, where the dynamic of $x(s)$ takes place in the Hill region $I = [x_0,x_1]$ and its solution $(p_x(t),x(t))$ with energy $h = 1/2$  lies in an algebraic curve or loop given by
\begin{equation}\label{eq:loop}
\alpha_{(F,I)} := \{(p_x,x): \frac{1}{2} = \frac{1}{2}p_x^2 + \frac{1}{2}F^2(x) \;\; \text{and} \:\: x_0 \leq x \leq x_1\},
\end{equation}  
and $\alpha_{(F,I)}$ is close and simple.

\begin{lemma}\label{lem:alp-smo}
$\alpha(F,I)$ is smooth if and only if $x_0$ and $x_1$ are regular points of $F(x)$, in other words, $\alpha(F,I)$ is smooth if and only if the corresponding geodesic $\gamma(t)$ is $x$-periodic.
\end{lemma}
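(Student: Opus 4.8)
The plan is to realize $\alpha(F,I)$ as a level set and apply the regular value theorem, after which everything reduces to understanding where the defining gradient can vanish. I would set $\Phi(p_x,x) := p_x^2 + F^2(x) - 1$, so that by \eqref{eq:loop} the loop is $\alpha(F,I) = \{\Phi = 0\}\cap\{x_0 \le x \le x_1\}$, a compact connected curve, with gradient
\[
\nabla\Phi(p_x,x) = \bigl(2p_x,\ 2F(x)F'(x)\bigr).
\]
The single observation driving the proof is that, on the curve $\alpha(F,I)$, the gradient $\nabla\Phi$ vanishes exactly at the non-regular endpoints: if $p_x \ne 0$ the first component is nonzero, while if $p_x = 0$ then $\Phi = 0$ forces $F^2(x) = 1$, so $F(x) \ne 0$ and $2F(x)F'(x)$ vanishes iff $F'(x) = 0$. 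Since $p_x = 0$ happens on $\alpha(F,I)$ only at $x = x_0, x_1$ (in the interior $F^2 < 1$, so $p_x^2 = 1 - F^2 > 0$), the gradient vanishes somewhere on $\alpha(F,I)$ if and only if $x_0$ or $x_1$ is non-regular.

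For the forward direction I would assume both endpoints regular; then $\nabla\Phi \ne 0$ along $\alpha(F,I)$, so $0$ is a regular value and the implicit function theorem presents the curve locally as a smooth graph --- $p_x = \pm\sqrt{1 - F^2(x)}$ over the interior, and $x = x(p_x)$ near each endpoint, where $\partial_x\Phi = 2FF' \ne 0$. Compactness and connectedness then upgrade this to a smooth embedded circle.

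For the backward direction I would argue by contrapositive: if an endpoint, say $x_0$, has $F'(x_0) = 0$, then $(0,x_0)$ is a critical point of $\Phi$ on the curve, and I must rule out that this is merely an accidental vanishing at a smooth point. Using $F'(x_0) = 0$ and $F(x_0)^2 = 1$, the Taylor expansion in $u = x - x_0$ reads
\[
\Phi(p_x,x) = p_x^2 + F(x_0)F''(x_0)\,u^2 + O(|u|^3),
\]
and because $x_0$ bounds a Hill interval with nonempty interior, $1 - F^2$ vanishes at $x_0$ to order at least two with $1 - F^2 > 0$ just inside $I$. The local model of $\{p_x^2 = 1 - F^2(x)\}$ is then two transverse branches meeting at $(0,x_0)$ (a node, when the order is exactly two) or a cusp (higher order), and in neither case is it a smooth $1$-manifold; hence $\alpha(F,I)$ is not smooth. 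The restatement in terms of $x$-periodicity is then immediate, since the classification recalled before the lemma identifies $x$-periodic geodesics with pairs $(F,[x_0,x_1])$ whose endpoints are simple roots of $1 - F^2$, equivalently regular points of $F$.

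I expect the main obstacle to be precisely this backward step: vanishing of $\nabla\Phi$ is necessary but not sufficient for a singular level set, so the local normal-form computation cannot be skipped, and it is exactly here that one must use that $x_0$ bounds a Hill interval of nonempty interior in order to control the order of vanishing of $1 - F^2$ and identify the node or cusp.
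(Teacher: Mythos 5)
Your proposal is correct, and its core computation is the same as the paper's: both identify the defining function (your $\Phi = p_x^2 + F^2 - 1$, the paper's $H_F$), compute the gradient $(2p_x,\, 2F(x)F'(x))$, observe it can vanish on the curve only at the endpoints $(0,x_0)$, $(0,x_1)$, and that vanishing there is equivalent to $F'(x_i)=0$. Where you genuinely diverge is the backward direction. The paper simply declares a point of $\alpha(F,I)$ to be smooth if and only if $\nabla H_F \neq 0$ there, i.e.\ it identifies \enquote{smooth point of the level set} with \enquote{regular point of the defining function}, so for the paper the \enquote{only if} direction is true by fiat and no further argument appears. You correctly point out that, in the differential-topology sense of \enquote{smooth embedded curve}, this identification is a gap: a critical point of $\Phi$ need not be a singular point of $\{\Phi=0\}$ (e.g.\ $x^3-y^3=0$), so one must actually exhibit the singularity. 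Your local normal-form analysis fills exactly this gap, and the mechanism is right: with $F(x_0)^2=1$, $F'(x_0)=0$, and $1-F^2>0$ just inside $I$, the polynomial $1-F^2$ vanishes at $x_0$ to finite order $m\ge 2$, and $p_x^2 = 1-F^2(x)$ is then singular at $(0,x_0)$. Two small inaccuracies, neither fatal: for even $m\ge 4$ the model is a tacnode (two tangent branches), not a cusp; and since $\alpha(F,I)$ carries the constraint $x\ge x_0$, the \enquote{node} for $m=2$ is really a corner (two arcs meeting at a nonzero angle). In every case the set has no tangent line, or disconnects into more than two local branches, at $(0,x_0)$, so it is not a smooth curve and your conclusion stands. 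In short: your proof is a strictly more rigorous version of the paper's, at the cost of the extra normal-form bookkeeping; the paper's is shorter because it builds the conclusion into its notion of smoothness.
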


\begin{proof}
A point $\alpha=(p_x,x)$ in $\alpha(F,I)$  is smooth if and only 
$$ 0 \neq \nabla H_F(p_x,x)|_{\alpha(F,I)} = (p_x,F(x)F'(x)),$$
then $\alpha$ is smooth for all $p_x \neq 0$, the points $\alpha(F,I)$ such that $p_x=0$ correspond to endpoints of the Hill interval $I$, since the condition $p_x=0$ implies $F^2(x) = 1$, the point $\alpha = (0,x_0)$  is smooth if $F'(x_0) \neq 0$, as well as, the point $\alpha = (0,x_1)$  is smooth if $F'(x_1) \neq 0$. Then $\alpha(F,I)$  is smooth if and only $x_0$ and $x_1$ are regular points of $F(x)$. Also, $\alpha(F,I)$  is smooth is equivalent to $H_F(p_x,x)|_{\alpha(F,I)}$ is never zero, which is equivalent to the Hamiltonian vector field is never zero on $\alpha(F,I)$. 
\end{proof}

\subsubsection{Arnold-Liouville manifold}

The Arnold-Liouville manifold $M|_{(a_i)}$ is given by
$$ M_{(a_0,\cdots,a_k)} := \{ (p,q) \in T^*J^k : \frac{1}{2} = H_F(p_x,x), \;\;  p_{\theta_i} = i! a_i  \}. $$
In the case  $\gamma(t)$ is $x$-periodic, $M_{\I}$ is diffeomorphic to $\mathbb{S}^1\times \R^{k+1}$,
where $\mathbb{S}^1$ is the simple closed and smooth curve  $\alpha(F,I)$. 

$\alpha(F,I)$ has two natural charts using $x$ as coordinates and given by solve the equation $H_{F} = 1/2$ with respect of $p_x$, namely, $(p_x,x) = (\pm \sqrt{1-F^2(x)},x)$. Having this in mind, 
\begin{lemma}\label{lem:alp-close}
Let $d\phi_t$ be the close one-form on $\alpha_{(F,I)}$ give by
\begin{equation}
d\phi_h := \frac{p_x}{\Pi(F,I)}|_{M_{(a_0,\cdots,a_k)}} dx = \frac{\sqrt{1-F^2(x)}}{\Pi(F,I)}  dx  ,
\end{equation}
where $\Pi(F,I)$ is the area enclosed by $\alpha(F,I)$. Then, 
$$\int_{\alpha_{(F,I)}} d\phi_h = 1 \qquad \frac{\partial}{ \partial h}  \Pi(F,I) = L(F,I).$$
as a consequence  exist the inverse function $h(\Pi)$.
\end{lemma}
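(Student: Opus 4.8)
The plan is to reduce the whole statement to the classical action--area relation for a one--degree--of--freedom Hamiltonian. First I would make $\Pi(F,I)$ explicit. Traversing $\alpha_{(F,I)}$ so that the upper chart $p_x=+\sqrt{1-F^2(x)}$ runs from $x_0$ to $x_1$ and the lower chart $p_x=-\sqrt{1-F^2(x)}$ runs from $x_1$ back to $x_0$, Green's theorem identifies the enclosed area with a line integral,
\[
\Pi(F,I)=\oint_{\alpha_{(F,I)}} p_x\,dx = 2\int_{I}\sqrt{1-F^2(x)}\,dx .
\]
With this in hand the first identity is immediate: since $d\phi_h=\tfrac{p_x}{\Pi(F,I)}\,dx$ on each chart (with $p_x=\pm\sqrt{1-F^2}$),
\[
\int_{\alpha_{(F,I)}} d\phi_h=\frac{1}{\Pi(F,I)}\oint_{\alpha_{(F,I)}} p_x\,dx=1 .
\]
Closedness of $d\phi_h$ needs no argument, since $\alpha_{(F,I)}$ is one dimensional and every $1$-form on it is closed.

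For the second identity I would deform the energy level, replacing the value $1/2$ by a variable $h$ and writing the corresponding loop as $p_x=\pm\sqrt{2h-F^2(x)}$ over the Hill interval $I_h=\{x:F^2(x)\le 2h\}$, so that $\Pi(h)=2\int_{I_h}\sqrt{2h-F^2(x)}\,dx$. The cleanest route is the Hamiltonian interpretation: along the flow of $H_F$ one has $\dot x=\partial H_F/\partial p_x=p_x$, hence $dt=dx/p_x$, while the implicit relation $H_F(p_x,x)=h$ gives $\partial p_x/\partial h=(\partial H_F/\partial p_x)^{-1}=1/\dot x$. Differentiating the area under the integral sign therefore yields
\[
\frac{\partial}{\partial h}\Pi(h)=\oint_{\alpha}\frac{\partial p_x}{\partial h}\,dx=\oint_{\alpha}\frac{dx}{\dot x}=\oint_{\alpha} dt ,
\]
which is exactly the time to go once around the loop, i.e.\ the $x$-period; evaluating at $h=1/2$ identifies it with $L(F,I)$ of \eqref{eq:period}.

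The one genuine point to watch, and the step I expect to be the main obstacle, is the legitimacy of this differentiation: the domain $I_h$ moves with $h$ and the integrand $1/\sqrt{2h-F^2}$ blows up at the turning points. I would handle this by applying Leibniz's rule to $\Pi(h)=2\int_{x_0(h)}^{x_1(h)}\sqrt{2h-F^2}\,dx$ and observing that the two boundary terms carry the factor $\sqrt{2h-F^2}$ evaluated at $x_0(h),x_1(h)$, which vanishes there; what survives is $2\int_{I_h} dx/\sqrt{2h-F^2}$, whose square-root singularity at each endpoint is integrable precisely because Lemma \ref{lem:alp-smo} guarantees $x_0,x_1$ are regular points of $F$ (simple turning points). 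This is the same classical computation recorded in the reference to Landau cited in Proposition \ref{prop:period}.

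Finally, for the asserted inverse function, I would note that the integrand $1/\sqrt{1-F^2}$ is strictly positive on the nonempty interval $I$, so $\partial\Pi/\partial h=L(F,I)>0$; hence $\Pi$ is a strictly increasing function of $h$ near $h=1/2$, and the inverse function theorem produces the desired $h(\Pi)$.
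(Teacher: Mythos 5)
Your proposal is correct and follows essentially the same route as the paper: Stokes'/Green's theorem gives $\Pi(F,I)=\oint_{\alpha_{(F,I)}}p_x\,dx=2\int_{I}\sqrt{1-F^2(x)}\,dx$, the normalization $\int_{\alpha_{(F,I)}}d\phi_h=1$ then follows immediately, and differentiating the area with respect to $h$ produces the period $L(F,I)$ of equation \eqref{eq:period}. The only difference is one of rigor: you explicitly justify the differentiation under the integral sign (the boundary terms vanish because the integrand vanishes at the moving endpoints, and the resulting square-root singularity is integrable since $x_0,x_1$ are regular points of $F$) and you derive the existence of $h(\Pi)$ from $\partial\Pi/\partial h=L(F,I)>0$ via the inverse function theorem, steps the paper carries out formally without comment.
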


\begin{proof}
Let $\Omega(F,I)$ be the closed region by $\alpha(F,I)$, then $d\phi_h$ can be extended to $\Omega(F,I)$ and Stokes' Theorem implies
\begin{equation}
\begin{split}
\Pi(F,I) := \int_{\alpha_{(F,I)}} p_xdx  & = \int_{\Omega(F,I)} dp_x \wedge dx, \\
                               & = 2 \int_{I}\sqrt{2h-F^2(x)}|_{h=\frac{1}{2}} dx.  
\end{split}
\end{equation}
This tell that $\int_{\alpha_{(F,I)}} d\phi_h = 1$, thus $d\phi_h$ is not exact.

$\Pi(F,I)$ is a function of $h$, so 
\begin{equation}
\frac{\partial }{\partial h} \Pi(F,I) =  \frac{\partial }{\partial h} \int_{I}   d\phi_h =  \int_{I}   \frac{2dx}{\sqrt{1-F^2(x)}}.
\end{equation}
\end{proof}
$\Pi(F,I)$ is also called an adiabatic invariant see \cite{Arnold} pg 297. We will use $\Pi$ when we use it as a variable and $\Pi(F,I)$ for the adiabatic invariant.

\subsection{Action-angle variables in $J^k$}\label{sub:sec2}

We will consider the actions $\I = (\Pi,a_0,\cdots,a_k)$ and find its angle coordinates $\phi = (\phi_h,\phi_{0},\cdots,\phi_{k})$, such the set $(\I,\phi)$ of coordinates are an action-angle coordinates in $J^k$.

\begin{lemma}\label{lem:act-ang}
Exist a canonical transformation $\Phi(p,q) = (\I,\phi)$, where $\phi_h$ is the local function define by the close form $d\phi_h$ from Lemma \ref{lem:alp-close} and
\begin{equation*}
\begin{split}
 \phi_i  &= -\int^x \frac{x^iF(\tilde{x}) d\tilde{x}}{\sqrt{1-F^2(\tilde{x})}} + i! \theta_i \qquad i = 0,\cdots,k. 
\end{split}
\end{equation*}
\end{lemma}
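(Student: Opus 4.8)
The plan is to realize $\Phi$ as the canonical transformation generated by Hamilton's characteristic function (the abbreviated action) restricted to the invariant manifold $M_{(a_0,\dots,a_k)}$, and only afterwards to trade the energy $h$ for the action $\Pi$. Since $p_{\theta_i}=i!\,a_i$ are conserved by Lemma \ref{lem:geo-flow} and the fundamental equation \eqref{eq:fund} solves to $p_x=\sqrt{2h-F^2(x)}$ on the branch where $\dot x>0$, I would introduce the type-two generating function
\begin{equation*}
W(x,\theta_0,\dots,\theta_k;h,a_0,\dots,a_k):=\int_{x_0}^{x}\sqrt{2h-F^2(\tilde x)}\,d\tilde x+\sum_{i=0}^{k}i!\,a_i\,\theta_i,
\end{equation*}
where $F(\tilde x)=\sum_i a_i\tilde x^i$, and treat $(h,a_0,\dots,a_k)$ as the new momenta. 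The conceptual point is that such a $W$ produces a canonical map as soon as the momentum relations $p=\partial W/\partial q$ hold.

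First I would verify those relations: $\partial W/\partial x=\sqrt{2h-F^2(x)}=p_x$ by \eqref{eq:fund}, and $\partial W/\partial\theta_i=i!\,a_i=p_{\theta_i}$. This exhibits $W$ as generating a canonical transformation onto coordinates $(h,a_0,\dots,a_k;t_h,\tau_0,\dots,\tau_k)$ with conjugate positions $t_h:=\partial W/\partial h$ and $\tau_i:=\partial W/\partial a_i$. Differentiating under the integral sign gives
\begin{equation*}
t_h=\int_{x_0}^{x}\frac{d\tilde x}{\sqrt{2h-F^2(\tilde x)}},\qquad \tau_i=-\int_{x_0}^{x}\frac{\tilde x^{\,i}F(\tilde x)\,d\tilde x}{\sqrt{2h-F^2(\tilde x)}}+i!\,\theta_i,
\end{equation*}
and evaluating at the arc-length value $h=1/2$ identifies $\tau_i$ with the claimed $\phi_i$, while $t_h$ is the elapsed time of the $x$-dynamics (consistent with $\dot t_h=\partial H/\partial h=1$ and with the period $L(F,I)$ of \eqref{eq:period}).

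It then remains to replace $h$ by $\Pi$. Here I would invoke Lemma \ref{lem:alp-close}: since $\partial\Pi/\partial h=L(F,I)\neq 0$, the relation $\Pi=\Pi(h,a)$ is invertible in $h$, yielding $h=h(\Pi,a)$ and allowing the substitution $S(x,\theta;\Pi,a):=W(x,\theta;h(\Pi,a),a)$. The angle conjugate to $\Pi$ is then $\phi_h=\partial S/\partial\Pi=(\partial h/\partial\Pi)\,t_h=t_h/L$, and I would check that this is precisely the primitive of the closed one-form $d\phi_h$ of Lemma \ref{lem:alp-close}: it satisfies $\dot\phi_h=\partial H/\partial\Pi=1/L$ and $\oint_{\alpha_{(F,I)}}d\phi_h=1$, which pins it down as the genuine angle along the closed orbit $\alpha_{(F,I)}$.

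The step I expect to be the main obstacle is this last one, namely showing that the full composite $\Phi$ is honestly canonical after $h$ is traded for $\Pi$. Because $\Pi$ depends on the $a_i$ through both $F$ and the endpoints of the Hill interval $I$, the change of momenta $(h,a)\mapsto(\Pi,a)$ is not a mere relabelling of a single coordinate, so passing from $\partial W/\partial a_i$ (at fixed $h$) to $\partial S/\partial a_i$ (at fixed $\Pi$) produces a cross-term proportional to $t_h$; the cleanest way to control it is to verify directly that $d\Pi\wedge d\phi_h+\sum_i da_i\wedge d\phi_i=dh\wedge dt_h+\sum_i da_i\wedge d\tau_i$, using that $\partial_{a_i}\Pi|_h$ is a boundary-free derivative (the integrand $\sqrt{2h-F^2}$ vanishes at the endpoints of $I$) governed by the very integral defining $\Delta\theta_i$ in \eqref{eq:the-period}. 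One must also keep in mind that $M_{(a_0,\dots,a_k)}\cong\mathbb S^1\times\R^{k+1}$ is a cylinder rather than a torus, so that only the $\phi_h$-direction carries a true period while the $\phi_i$ are non-compact angle variables; isolating that compact circle is exactly what makes $\oint d\phi_h=1$ meaningful and the construction well defined.
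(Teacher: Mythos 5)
Your first step is sound: the transformation generated by $W$, with new momenta $(h,a_0,\dots,a_k)$ and conjugate variables $(t_h,\tau_0,\dots,\tau_k)$, is genuinely canonical, and it is in substance what the paper itself establishes (the paper's $S$ in \eqref{H-J-sol} is your $W$ with $h(\Pi)$ substituted, and when the paper checks ``the second condition for the case $i=0,\dots,k$'' it differentiates at \emph{fixed} $h$, i.e.\ it computes exactly your $\tau_i$). The genuine gap is in the step you flagged as the main obstacle, and it is not a technical nuisance to be controlled: the two-form identity you propose to verify is false. By the implicit function theorem, $\partial h/\partial a_i|_{\Pi}=-\bigl(\partial\Pi/\partial a_i|_{h}\bigr)/L$, and differentiating $\Pi=2\int_I\sqrt{2h-F^2(x)}\,dx$ under the integral sign (legitimate, as you note, since the integrand vanishes at $\partial I$) gives
\[
\frac{\partial \Pi}{\partial a_i}\Big|_{h=1/2}=-2\int_I\frac{x^iF(x)\,dx}{\sqrt{1-F^2(x)}}=-\,i!\,\Delta\theta_i(F,I)
\]
by \eqref{eq:the-period}. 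So the cross-term in $\partial S/\partial a_i|_{\Pi}$ equals $t_h\,i!\,\Delta\theta_i(F,I)/L$, and it vanishes for every $i$ precisely when all $\Delta\theta_i(F,I)=0$ --- which is the periodicity condition that Theorem \ref{the:non-period} shows never occurs for nonconstant $F$. Equivalently, $\{\tau_i,\Pi\}=\partial\Pi/\partial a_i|_h=-i!\,\Delta\theta_i\neq0$ for some $i$ (here $\{\tau_i,h\}=0$ because $\tau_i$ is a constant of motion, and $\{\tau_i,a_j\}=\delta_{ij}$), so \emph{no} choice of $\phi_h$ can make $(\Pi,a_0,\dots,a_k;\phi_h,\tau_0,\dots,\tau_k)$ canonical: the $\tau_i$ of the statement are conjugate to the $a_i$ only in your first chart, where the remaining pair is $(h,t_h)$, not $(\Pi,\phi_h)$. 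Boundary-freeness of $\partial_{a_i}\Pi|_h$ only licenses differentiation under the integral; it does not make that integral zero.

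Two further remarks. First, your claim that $\dot\phi_h=1/L$ and $\oint d\phi_h=1$ ``pins down'' $\phi_h$ as the primitive of the form in Lemma \ref{lem:alp-close} is too quick: on the level set $d(t_h/L)=dx/(L\,p_x)$ while the paper's form is $p_x\,dx/\Pi(F,I)$; these have the same period integral but are different one-forms, so agreement of periods identifies the cohomology class, not the function. Second, you should be aware that the paper's own proof never confronts the $a_i$-dependence of $h(\Pi,a)$ at all, so what your more careful bookkeeping exposes is a defect in the statement itself (with actions $(\Pi,a)$, the correct angles conjugate to $a_i$ are $\tau_i+t_h\,i!\,\Delta\theta_i/L$, not $\tau_i$); your canonical chart $(h,a;t_h,\tau)$, together with Lemma \ref{lem:geo-flow}, already supplies everything that the downstream arguments (Lemma \ref{lem:calib} and Proposition \ref{prop:period}) actually use.
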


To construct the canonical transformation $\Phi(p,q)$, we will look for its generating function $S(\I,q)$, of the second type that satisfies the three following conditions. 
\begin{equation}\label{eq:gen-funct}
p = \frac{\partial S}{\partial q}, \;\;\; \phi = \frac{\partial S}{\partial \I}, \;\;\; H(\frac{\partial S}{\partial q},q) = h(\Pi) = \frac{1}{2},
\end{equation}
where $h(\Pi)$ is the function defined in Lema \ref{lem:alp-close}. For more detail on the definition of $S(\I,q)$, see \cite{Arnold} Section 50 or \cite{Landau}.

To find $S(\I,q)$, we will solve the \sR Hamilton-Jacobi equation associated with the \sR geodesic flow. For more details about the definition of this equation in \sR geometry and its relations with the Eikonal equation, see \cite{tour} 8 pg or \cite{RM-ABD}. 

\begin{proof}
The \sR Hamilton-Jacobi equation is given by
\begin{equation}\label{eq:H-J}
 h|_{1/2} = \frac{1}{2}(\frac{\partial S}{\partial x})^2 + \frac{1}{2}(\sum_{i=0}^k \frac{x^i}{i!} \frac{\partial S}{\partial \theta_i})^2.
\end{equation}

Take the ansatz
\begin{equation*}
S(\I,q)  := f(x) + \sum_{i=0}^k  i! a_i \theta_i,
\end{equation*}
as a solution. The equation \eqref{eq:H-J} becomes equation \eqref{eq:fund}, then the generating function is given by
\begin{equation}\label{H-J-sol}
\begin{split}
S(\I,q)  & = \int^{x}\sqrt{2h(\Pi)-F^2(\tilde{x})} d\tilde{x} + \sum_{i=0}^n i! a_i \theta_i \\
         & = \int^{q} (p_xdx+\sum_{i=0}^k p_{\theta_i}d\theta_i)|_{M_{(a_0,\cdots,a_k)} }. \\ 
\end{split}
\end{equation}
Here, $h(\Pi) = 1/2$ and  $S(\I,q):I \times \R^{k+1} \to \R$ is a local function. Another way to build the solutions is the second line from equation \eqref{H-J-sol}.

We can see that the first and third conditions are satisfied by construction, and the second condition for the case $i= 0,\cdots,k$. We need to verify for the action $\Pi(F,I)$. For that,   
we consider the global behavior of $S(\I,q)$; in other words, we will compute the change in $S(\I,q)$ after a loop  $\alpha(F,I)$. 
\begin{equation*}
\begin{split}
\Delta S(\Pi) =  2\int_{\alpha(F,I)} dS & = 2\int_{I} \sqrt{1-F^2(x)} dx = \Pi(F,I), \\ 
\end{split}
\end{equation*}
so $\frac{\partial}{\partial \Pi} \Delta S(\Pi) = 1$, which implies $\frac{\partial}{\partial \Pi} S(\I,q) = \phi_h$.
\end{proof}
Note: In \cite{RM-ABD} a projection $\pi_{F}:J^k\to \R^3_{F}$ was built and the solution to the \sR Hamilton-Jacobi equation on the magnetic space $\R^3_{F}$ was found. The solution given by equation \eqref{H-J-sol} is the pull-back by $\pi_F$ of the solution previously found it in $\R_{F}$, where $\pi_{F}$ is in fact, a \sR submersion.

\subsubsection{Horizontal derivative}
A horizontal derivative $\nabla_{hor}$ of a function $S:J^k \to \R$ is the unique horizontal vector field that satisfies; for every $q$ in $J^k$,
\begin{equation}
<\nabla_{hor}S,v>_{q} = dS(v),\;\;\; \text{for}\;\;v\; \in D_q,
\end{equation}
where $< , >_q$ is th \sR metric in $D_q$. For more detail see \cite{tour} pg 14-15 or \cite{agrachev}.
 
\begin{lemma}\label{lem:calib}
Let $\gamma(t)$ be  a geodesic parameterized by arc-length corresponding to the pair $(F,I)$ and $S_{F}$ the solution given by equation \eqref{H-J-sol}, then 
$$dS_{F}(\dot{\gamma})(t) = 1. $$
\end{lemma}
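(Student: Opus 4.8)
The plan is to recognize $dS_F$ along the solution as the canonical one-form $p_x\,dx + \sum_{i=0}^k p_{\theta_i}\,d\theta_i$ and then pair it against the explicit velocity vector supplied by Lemma \ref{lem:geo-flow}. The first condition in \eqref{eq:gen-funct}, namely $p = \partial S/\partial q$, tells us that on the Arnold-Liouville manifold $M_{(a_0,\cdots,a_k)}$ we have $\partial S_F/\partial x = p_x = \sqrt{1-F^2(x)}$ and $\partial S_F/\partial \theta_i = p_{\theta_i} = i!\,a_i$; equivalently $dS_F = p_x\,dx + \sum_{i=0}^k p_{\theta_i}\,d\theta_i$ restricted to the geodesic. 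This is precisely the second line of \eqref{H-J-sol}, so no new computation is needed to set it up.

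Next I would substitute $\dot{\gamma} = P_1 X_1 + P_2 X_2$ with $(P_1,P_2) = (p_x, F(x))$ and evaluate each differential on the framing. Since $dx(X_1)=1$, $dx(X_2)=0$, $d\theta_i(X_1)=0$, and $d\theta_i(X_2) = x^i/i!$, the pairing becomes
\begin{equation*}
dS_F(\dot{\gamma}) = p_x\, dx(\dot{\gamma}) + \sum_{i=0}^k p_{\theta_i}\, d\theta_i(\dot{\gamma}) = p_x P_1 + P_2 \sum_{i=0}^k p_{\theta_i}\frac{x^i}{i!}.
\end{equation*}
Using $p_{\theta_i} = i!\,a_i$ together with $F(x) = \sum_{i=0}^k a_i x^i$, the momentum sum collapses to $\sum_{i=0}^k a_i x^i = F(x) = P_2$, so the right-hand side is exactly $p_x^2 + F^2(x)$.

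Finally I would invoke the fundamental equation \eqref{eq:fund}: because $\gamma$ is parameterized by arc-length the energy is $h = 1/2$, hence $\tfrac{1}{2} p_x^2 + \tfrac{1}{2} F^2(x) = \tfrac{1}{2}$, that is $p_x^2 + F^2(x) = 1$, giving $dS_F(\dot{\gamma}) = 1$. Conceptually this is just the statement that $dS_F(\dot{\gamma}) = \sum_a p_a \dot{q}^a = \sum_a p_a\, \partial H_{sR}/\partial p_a = 2H_{sR}$ by Euler's identity, since $H_{sR}$ is homogeneous of degree two in the momenta, and $2H_{sR} = 1$ on the unit-speed level set. There is no genuine obstacle here; the only points requiring care are justifying the identification $dS_F = p\,dq$ from the generating-function conditions before pairing, and correctly collapsing the momentum sum back into $F(x)$ — everything else is a one-line substitution.
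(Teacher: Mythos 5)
Your proof is correct and takes essentially the same route as the paper's: both identify the derivatives of $S_F$ along the frame with the momenta ($X_1(S_F)=P_1=p_x$ and $X_2(S_F)=\sum_i p_{\theta_i}x^i/i!=F(x)=P_2$), expand $\dot{\gamma}=P_1X_1+P_2X_2$ via Lemma \ref{lem:geo-flow}, and conclude from $P_1^2+P_2^2=1$. The only difference is packaging — the paper phrases the computation through the horizontal gradient ($\nabla_{hor}S_F=\dot{\gamma}$, then $dS_F(\dot{\gamma})=\langle\dot{\gamma},\dot{\gamma}\rangle=1$ by arc-length), while you pair the canonical one-form $p\,dx+\sum_i p_{\theta_i}\,d\theta_i$ directly against $\dot{\gamma}$ and invoke the energy level $2H_{sR}=1$, which is the same normalization.
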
 
\begin{proof}
Let us prove that $\dot{\gamma}(t) = (\nabla_{hor}S_F)_{\gamma(t)}$, which is just a consequence that $S_F$ is  solution  to the Hamilton Jacobi equation, that is, 
\begin{equation*}
\begin{split}
X_1(S_F)|_{\gamma(t)} &= \frac{\partial S}{\partial x}|_{\gamma(t)} = p_x(t), \\
\end{split}
\end{equation*}
but, Lemma \ref{lem:geo-flow} implies that $P_1(t) = p_x(t)$, so $P_1(t) = X_1(S_F)|_{\gamma(t)}$. As well,
\begin{equation*}
\begin{split}
X_2(S_F)|_{\gamma(t)} &= \sum_{i=0}^k \frac{x^i(t)}{i!} \frac{\partial S}{\partial \theta_i}|_{\gamma(t)} = \sum_{i=0}^k a_i x^i(t)  = F(x(t)),
\end{split}
\end{equation*}
also, Lemma \ref{lem:geo-flow} implies that $P_2(t) = F(x(t))$, so $P_2(t) = X_2(S_F)|_{\gamma(t)}$. As a consequence;
$$  \nabla_{hor}S|_{\gamma(t)}:= X_1(S_F)|_{\gamma(t)} X_1 + X_2(S_F)|_{\gamma(t)} X_2 = P_1(t) X_1 + P_2(t) X_2,$$
Lemma \ref{lem:geo-flow} implies $P_1(t) X_1 + P_2(t) X_2=\dot{\gamma}(t)$. Thus, 
$\nabla_{hor}S =\dot{\gamma}(t)$ and $dS_F(v)|_{q} = <\nabla_{hor}S_F,v>$ for all $D_q$. In particular, $$dS_F(\dot{\gamma}) = <\dot{\gamma}(t),\dot{\gamma}(t)> = 1,$$
since $t$ is arc-length parameter.
\end{proof}

\subsection{Proof of Proposition \ref{prop:period}}\label{sub:sec3}

\begin{proof}
It is well-known that the fundamental system system $H_{F}$ with energy $1/2$ has period $L(F,I)$ given by equation \eqref{eq:period} and the relation between $\Pi(F,I)$ and $L(F,I)$ is given by Lemma \ref{lem:alp-close}, see \cite{Arnold} pg 281.  
Let  $\gamma(t)$ be a $x$-periodic corresponding to $(F,I)$, we are interested in seeing the change suffered by the coordinates $\theta_i$ after one $L(I,F)$. For that, we consider the change in $S(\I,q)$ after $\gamma(t)$ travel form $t$ to $t+L(F,I)$, in other words, 
\begin{equation}\label{eq:int-sol}
\begin{split}
 L(F,I) = \int_{t}^{t+L(F,I)} dS(\dot{\gamma}(t))dt & = \Pi(F,I) + \sum_{i=0}^n i! a_i \Delta \theta_i(F,I). \\ 
\end{split}
\end{equation}
On the left side of the equation is a consequence of Lemma \ref{lem:calib}, and the right side is the integration term by term. The derivative of equation \eqref{eq:int-sol} with respect  to  $a_i$ to find  $-\frac{\partial}{\partial a_i} \Pi(F,I) = i!\Delta \theta_i$, which is equivalent to equation (\ref{eq:the-period}).

We differentiate $\Delta \theta_i := \theta_i(t+L) - \theta_i(t)$ respect to $t$, to see that $\Delta \theta_i(F,I)$ is independent of the initial point. The derivative is
\begin{equation*}
\frac{x^i(t+L)F(x(t+L))}{\sqrt{1-F^2(x(t+L))}} - \frac{x^i(t)F(x(t))}{\sqrt{1-F^2(x(t))}},
\end{equation*}
but $x(t+L) = x(t)$.
\end{proof}

\begin{appendix}

\end{appendix}

\nocite{*} % to test all bib entrys
\bibliographystyle{unsrt}
\bibliography{bibli} %

\end{document}